\documentclass[11pt]{amsart}

\usepackage[T1]{fontenc}
\usepackage[utf8]{inputenc}
\usepackage[a4paper,margin=1in]{geometry}
\usepackage{amsmath,amssymb,amsthm}
\usepackage{mathtools}
\usepackage{bm}
\usepackage{enumitem}
\usepackage{microtype}
\usepackage[hidelinks]{hyperref}
\usepackage[nameinlink,noabbrev]{cleveref}

\numberwithin{equation}{section}
\theoremstyle{plain}
\newtheorem{theorem}{Theorem}[section]
\newtheorem{lemma}[theorem]{Lemma}
\newtheorem{corollary}[theorem]{Corollary}

\theoremstyle{definition}

\theoremstyle{remark}
\newtheorem{remark}[theorem]{Remark}

\AddToHook{env/theorem/begin}{\crefalias{section}{theorem}}
\AddToHook{env/lemma/begin}{\crefalias{section}{lemma}}
\AddToHook{env/corollary/begin}{\crefalias{section}{corollary}}
\AddToHook{env/proposition/begin}{\crefalias{section}{proposition}}
\AddToHook{env/definition/begin}{\crefalias{section}{definition}}
\AddToHook{env/remark/begin}{\crefalias{section}{remark}}

\crefname{theorem}{theorem}{theorems}
\Crefname{theorem}{Theorem}{Theorems}
\crefname{lemma}{lemma}{lemmas}
\Crefname{lemma}{Lemma}{Lemmas}
\crefname{corollary}{corollary}{corollaries}
\Crefname{corollary}{Corollary}{Corollaries}
\crefname{proposition}{proposition}{propositions}
\Crefname{proposition}{Proposition}{Propositions}
\crefname{remark}{remark}{remarks}
\Crefname{remark}{Remark}{Remarks}
\crefname{definition}{definition}{definitions}
\Crefname{definition}{Definition}{Definitions}

\newcommand{\RR}{\mathbb{R}}
\newcommand{\Ha}{H_a}
\newcommand{\dd}{\,\mathrm{d}}
\newcommand{\one}{\mathbf{1}}

\title[Two-Weight Fractional Integrals for Inverse-Square Operators]
{Sharp and Endpoint Two-Weight Fractional Integral Estimates for Schr\"odinger Operators with Inverse-Square Potentials}

\author{Haochen Liu}
\address{School of Mathematics and Physics, Qingdao University of Science and Technology}
\email{liuhc@mails.qust.edu.cn}

\author{Qinghao Yu}
\address{School of Mathematics and Physics, Qingdao University of Science and Technology}
\email{19707725295@163.com}

\author{Hongyan Zhou\texorpdfstring{\textsuperscript{*}}{*}}
\address{School of Mathematics and Physics, Qingdao University of Science and Technology}
\email{zhouhongyan@qust.edu.cn}
\thanks{Corresponding author: Hongyan Zhou, \href{mailto:zhouhongyan@qust.edu.cn}{zhouhongyan@qust.edu.cn}.}

\subjclass[2020]{Primary 35J10, 42B35; Secondary 35P05, 46E30}
\keywords{inverse-square Schr\"odinger operator; fractional integral; Hardy--Littlewood--Sobolev inequality; Stein--Weiss inequality; power weights; Lorentz spaces}
\date{}

\begin{document}

\begin{abstract}
Let $\Ha=-\Delta+a|x|^{-2}$ be the Friedrichs extension on $L^2(\RR^d)$, where $d\ge 3$ and $-(d-2)^2/4\le a<0$ is in the attractive Hardy range. Starting from the known positive two-sided comparison for the kernel of $\Ha^{-s/2}$, we determine the complete strong non-endpoint mapping range for two power weights. If
\[
 \sigma=\frac{d-2-\sqrt{(d-2)^2+4a}}{2},
 \qquad 0<s<d-2\sigma,
\]
then
\[
 \bigl\||x|^{-\beta}\Ha^{-s/2}f\bigr\|_{L^q}
 \lesssim
 \bigl\||x|^\alpha f\bigr\|_{L^p}
\]
holds for $1<p,q<\infty$ precisely under the exponent ordering, scaling, sum, and origin conditions stated in the main theorem. At either origin-critical boundary, the strong estimate and the corresponding weighted $L^p\to L^{q,\infty}$ estimate fail, whereas the Lorentz replacement $L^{p,1}\to L^{q,\infty}$ holds. We also derive weighted Sobolev consequences and treat the Hardy-critical Friedrichs case separately. No new heat-kernel, spectral multiplier, Bernstein, or Littlewood--Paley theorem is claimed.
\end{abstract}

\maketitle

\section{Introduction}
\label{sec:introduction}

Consider the Friedrichs extension
\begin{equation}
 \Ha=-\Delta+\frac{a}{|x|^2}
 \quad\text{on }L^2(\RR^d),
 \qquad d\ge3,
 \qquad -\frac{(d-2)^2}{4}\le a<0.
 \label{eq:operator}
\end{equation}
Set
\begin{equation}
 \sigma=\sigma(a)
 :=\frac{d-2-\sqrt{(d-2)^2+4a}}{2},
 \qquad 0<\sigma\le\frac{d-2}{2}.
 \label{eq:sigma}
\end{equation}
The attractive inverse-square potential is scale invariant and lies at the level of Hardy's inequality. Its heat kernel has singular factors at $x=0$ and $y=0$, rather than a plain Gaussian bound. Killip, Miao, Visan, Zhang, and Zheng proved the following positive two-sided fractional-kernel comparison \cite[Lemma~2.2]{KillipEtAl2018}: whenever
\begin{equation}
 0<s<d-2\sigma,
 \label{eq:s-range}
\end{equation}
one has, for $x,y\ne0$ and $x\ne y$,
\begin{equation}
 \Ha^{-s/2}(x,y)
 \simeq
 |x-y|^{s-d}
 \left(
 \frac{|x|}{|x-y|}\wedge
 \frac{|y|}{|x-y|}\wedge1
 \right)^{-\sigma}.
 \label{eq:kmmvzz-kernel}
\end{equation}

The classical Stein--Weiss theorem \cite{SteinWeiss1958} characterizes two-power-weight bounds for a single Riesz kernel. The right-hand side of \cref{eq:kmmvzz-kernel} contains three mechanisms: a near-diagonal Riesz singularity, an output-origin singularity, and an input-origin singularity. The purpose of this note is to determine their combined sharp mapping range.

Our main strong result states that
\begin{equation}
 \bigl\||x|^{-\beta}\Ha^{-s/2}f\bigr\|_{L^q(\RR^d)}
 \le C
 \bigl\||x|^\alpha f\bigr\|_{L^p(\RR^d)}
 \label{eq:main-estimate}
\end{equation}
holds for $1<p,q<\infty$ if and only if
\begin{align}
 p&\le q,
 \label{eq:condition-pq}\\
 \frac1q&=\frac1p+\frac{\alpha+\beta-s}{d},
 \label{eq:condition-scaling}\\
 \alpha+\beta&\ge0,
 \label{eq:condition-sum}\\
 \alpha+\sigma&<\frac d{p'},
 \qquad
 \beta+\sigma<\frac d q.
 \label{eq:condition-origin}
\end{align}
A self-contained direct proof of \cref{eq:condition-pq} is included in \cref{app:pq}.

At either equality in \cref{eq:condition-origin}, the corresponding strong estimate fails. A localized argument nevertheless gives the Lorentz replacement
\begin{equation}
 \bigl\||x|^{-\beta}\Ha^{-s/2}f\bigr\|_{L^{q,\infty}}
 \lesssim
 \bigl\||x|^\alpha f\bigr\|_{L^{p,1}}.
 \label{eq:endpoint-introduction}
\end{equation}
The corresponding weighted $L^p\to L^{q,\infty}$ estimate fails at either boundary.

The closest sharp radial result is the theorem of Nowak and Stempak for non-modified Hankel potentials \cite[Theorem~2.10]{NowakStempak2017}. After the radial Liouville transform, their individual origin conditions agree with ours, but their sum condition is weaker. The full-space obstruction $\alpha+\beta\ge0$ arises from translated test functions and is invisible in the radial class. General two-weight testing theorems of Sawyer and Kairema \cite{Sawyer1988,Kairema2013} provide an abstract context but do not state the explicit range in \cref{eq:condition-pq,eq:condition-scaling,eq:condition-sum,eq:condition-origin}.

The paper is organized as follows. \Cref{sec:model-main} introduces the model kernel and states the main results. \Cref{sec:necessity} proves the necessary conditions. \Cref{sec:sufficiency} gives the three-region decomposition and the sufficiency proof. \Cref{sec:lorentz} treats the critical Lorentz endpoints. \Cref{sec:sobolev} records weighted Sobolev consequences. \Cref{sec:hardy-critical} isolates the Hardy-critical case, and \cref{sec:previous-work} discusses earlier work and limitations. \Cref{app:pq} contains the direct proof of \cref{eq:condition-pq}.

\section{Model Kernel and Main Results}
\label{sec:model-main}

Define
\begin{equation}
 K_{s,\sigma}(x,y)
 =|x-y|^{s-d}
 \left(
 \frac{|x|}{|x-y|}\wedge
 \frac{|y|}{|x-y|}\wedge1
 \right)^{-\sigma}
 \label{eq:model-kernel}
\end{equation}
and
\begin{equation}
 T_{s,\sigma}f(x)
 =\int_{\RR^d}K_{s,\sigma}(x,y)f(y)\,\dd y.
 \label{eq:model-operator}
\end{equation}

\begin{theorem}[Sharp strong range for the model kernel]
\label{thm:model-strong}
Let $d\ge3$, $0<\sigma\le(d-2)/2$, and $0<s<d-2\sigma$. For $1<p,q<\infty$, the estimate
\begin{equation}
 \bigl\||x|^{-\beta}T_{s,\sigma}f\bigr\|_{L^q(\RR^d)}
 \le C\bigl\||x|^\alpha f\bigr\|_{L^p(\RR^d)}
 \label{eq:model-strong}
\end{equation}
holds if and only if \cref{eq:condition-pq,eq:condition-scaling,eq:condition-sum,eq:condition-origin} hold.
\end{theorem}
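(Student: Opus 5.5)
Both directions will be handled separately; the main work is a three-region split of the kernel.

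\emph{Necessity.} The scaling condition \cref{eq:condition-scaling} follows from dilating $f\mapsto f(\lambda\cdot)$: $K_{s,\sigma}$ is homogeneous of degree $s-d$ jointly in $(x,y)$, so the two sides scale by different powers of $\lambda$ unless \cref{eq:condition-scaling} holds. For $p\le q$ we use the standard translation argument, as in \cref{app:pq}. Take $f=f_0+f_0(\cdot-Re)$ with $f_0$ a bump supported away from the origin and $R\to\infty$. Far from the origin both weights and the kernel are comparable to translation-invariant ones. The estimate would then force $\|g+g(\cdot-Re)\|_q\lesssim\|f_0+f_0(\cdot-Re)\|_p$, which gives $2^{1/q}\lesssim2^{1/p}$ iterated over $N$ copies, and this fails if $q<p$.

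For \cref{eq:condition-sum} we take $f$ a unit bump at $Re$, $|e|=1$, of radius $r\ll R$. Then $T_{s,\sigma}f\gtrsim r^{s}$ on the ball $B(Re,r)$, while the weights contribute $R^{-\beta}r^{d/q}$ and $R^{\alpha}r^{d/p}$. Combining with scaling, the ratio behaves like $R^{-(\alpha+\beta)}$ times a power of $r$. This power is $r^{\alpha+\beta}$ after \cref{eq:condition-scaling}. Hence the ratio is $(r/R)^{\alpha+\beta}$, which is unbounded if $\alpha+\beta<0$.

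For the origin conditions we use the singular factors. If $\alpha+\sigma\ge d/p'$, take $f\ge0$ supported in $|y|<1$ near-extremal for the duality $\int |y|^{-\sigma}f$ against $|y|^\alpha f\in L^p$. For $|x|\sim 2$ we have $T f(x)\gtrsim\int_{|y|<1}|y|^{-\sigma}f(y)\,dy$, since there $|y|/|x-y|$ is the minimum. This integral is infinite for suitable $f$ with $|x|^\alpha f\in L^p$ precisely when $|y|^{-\sigma-\alpha}\notin L^{p'}(B_1)$, i.e.\ when $\alpha+\sigma\ge d/p'$. Symmetrically, taking $f=\one_{1<|y|<2}$ gives $Tf(x)\gtrsim|x|^{-\sigma}$ near $0$. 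Then $|x|^{-\beta-\sigma}\in L^q(B_1)$ requires $\beta+\sigma<d/q$.

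\emph{Sufficiency.} Decompose $\RR^d\times\RR^d$ into three regions.
\begin{enumerate}[label=(\roman*)]
\item Region (i), where $|x-y|<\tfrac12\max(|x|,|y|)$, so that $|x|\sim|y|$. Here the kernel is $\lesssim|x-y|^{s-d}$ because the min factor is $\gtrsim1$ and is truncated at $1$.
\item Region (ii), where $|y|\le|x|/2$. Here $|x-y|\sim|x|$ and the kernel is $\sim|x|^{s-d+\sigma}|y|^{-\sigma}$.
\item Region (iii), where $|x|\le|y|/2$. Here the kernel is $\sim|y|^{s-d+\sigma}|x|^{-\sigma}$.
\end{enumerate}
Regions (ii) and (iii) are Hardy-type operators. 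Their weighted $L^p\to L^q$ bounds for $p\le q$ follow from the classical two-weight Hardy inequality (Bradley/Maz'ya criterion) or a direct dyadic-shell computation. The Muckenhoupt-type constant is finite exactly under \cref{eq:condition-origin} and \cref{eq:condition-scaling}. For example, in region (ii) we need $|y|^{-\sigma-\alpha}\in L^{p'}(B_r)$ and $|x|^{s-d+\sigma-\beta}\in L^q(B_r^c)$, and the product of these norms is scale-invariant.

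Region (i) is the local part. We decompose into dyadic annuli $A_k=\{|x|\sim2^k\}$, on each of which the weights are constant, comparable to $2^{k\alpha}$ and $2^{-k\beta}$. The operator acts from $f\one_{\tilde A_k}$ to $A_k$ with kernel $|x-y|^{s-d}$ truncated at scale $2^k$. By Hardy--Littlewood--Sobolev combined with H\"older on the bounded annulus, the local $L^p\to L^q$ norm is $\lesssim2^{k(s-d/p+d/q)}=2^{k(\alpha+\beta)}$. This uses $1/q\ge1/p-s/d$, which is equivalent to $\alpha+\beta\ge0$. If $1/q<1/p-s/d$ were possible, the truncated kernel would fail to map; here $\alpha+\beta\ge0$ together with $q<\infty$ is exactly what makes truncated HLS valid. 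Summing over $k$ uses $p\le q$ through $\ell^p\subset\ell^q$.

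The main obstacle I expect is this local region at the borderline $1/q=1/p-s/d$ (i.e.\ $\alpha+\beta=0$). There the full HLS inequality is needed rather than a Young-inequality bound, and the case $s\ge d$ must be checked separately, where the truncated kernel is bounded and H\"older suffices.
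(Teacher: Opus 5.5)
\textbf{The gap is in your proof of $p\le q$.} Far from the origin, power weights are locally constant but not translation invariant. Take a bump $f_0$ supported at distance $\sim1$ from the origin. Its translate $f_0(\cdot-Re)$ has weighted input norm about $R^{\alpha}$ times that of $f_0$, and weighted output about $R^{-\beta}$ times. So $\|g+g(\cdot-Re)\|_q\lesssim\|f_0+f_0(\cdot-Re)\|_p$ does not follow from \cref{eq:model-strong}.

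If you renormalize the copy to have the same weighted input norm as $f_0$, its weighted output is of order $R^{-(\alpha+\beta)}$. This is fatal exactly in the case you must exclude. If $q<p$, the scaling relation gives $\alpha+\beta=s+d(1/q-1/p)>s>0$, so the far copies contribute nothing as $R\to\infty$.

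Clustering many copies near one far point does not help either. For data and output confined to one annulus $\{|x|\sim R\}$, the local piece already satisfies an $L^p\to L^q$ bound with $q<p$: apply Young's inequality, then H\"older on a set of measure $\sim R^d$. By scaling, the constant is independent of $R$. A contradiction can therefore only come from infinitely many dyadic scales, so you need dilation, not translation.

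This is what \cref{app:pq} actually does. Set $R_j=10^j$, $r_j=cR_j$ and $f_j=R_j^{-\alpha}r_j^{-d/p}\one_{B(R_je_1,r_j)}$. Homogeneity and $s-d/p+d/q=\alpha+\beta$ give each block weighted input norm $\simeq1$ and weighted output $\gtrsim c^{\alpha+\beta}$ on the disjoint ball $B(R_je_1,r_j/4)$. Positivity of the kernel then gives $M^{1/q}\lesssim M^{1/p}$.

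\textbf{The rest of your proposal is sound.} Your other necessity tests are correct and essentially the paper's: dilation, a small ball at $Re$ giving the ratio $(r/R)^{\alpha+\beta}$, duality against $|y|^{-\sigma-\alpha}\one_{B_1}$, and an annulus input giving $|x|^{-\sigma}$ near $0$.

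Your sufficiency proof is correct but takes a different route. The paper dominates $K_{s,\sigma}$ globally by $r^{s-d}+|x|^{-\sigma}r^{s+\sigma-d}+|y|^{-\sigma}r^{s+\sigma-d}$ and quotes the Stein--Weiss theorem three times; this is short but uses Stein--Weiss as a black box. You effectively reprove Stein--Weiss for this kernel. The local part is handled by HLS or Young on dyadic annuli plus $\ell^p\subset\ell^q$. The two Hardy parts are handled by Bradley's criterion, and for each of them the two integrability conditions are equivalent, under scaling, to a single origin condition. Your route is more self-contained and shows that $\alpha+\beta\ge0$ enters only through the local part.

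Two small repairs are needed.
\begin{itemize}
\item Your region (i) omits pairs with $|x|/2<|y|<2|x|$ and $|x-y|\ge\frac12\max\{|x|,|y|\}$, e.g.\ $y=-x$. Take (i) to be the complement of (ii) and (iii); there $K_{s,\sigma}\le3^\sigma|x-y|^{s-d}$ and $|x-y|<3|x|$, so your dyadic argument still applies.
\item The case $s\ge d$ never occurs here. The case that needs the truncated-kernel Young bound instead of HLS plus H\"older is $sp\ge d$. It is harmless, because $q<\infty$ then forces $1/q>1/p-s/d$.
\end{itemize}
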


\begin{theorem}[Sharp strong range for the inverse-square operator]
\label{thm:operator-strong}
Let $\Ha$ be the operator in \cref{eq:operator}, let $\sigma$ be defined by \cref{eq:sigma}, and assume \cref{eq:s-range}. For $1<p,q<\infty$, the estimate in \cref{eq:main-estimate} holds if and only if \cref{eq:condition-pq,eq:condition-scaling,eq:condition-sum,eq:condition-origin} hold.
\end{theorem}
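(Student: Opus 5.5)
The plan is to reduce \cref{thm:operator-strong} to \cref{thm:model-strong} through the two-sided kernel comparison \cref{eq:kmmvzz-kernel} of Killip, Miao, Visan, Zhang and Zheng \cite[Lemma~2.2]{KillipEtAl2018}. The hypotheses match exactly: $\sigma=\sigma(a)$ lies in $(0,(d-2)/2]$ by \cref{eq:sigma}, and $s$ satisfies \cref{eq:s-range}. So there are constants $c,C>0$ with
\[
 c\,K_{s,\sigma}(x,y)\le \Ha^{-s/2}(x,y)\le C\,K_{s,\sigma}(x,y)
 \qquad\text{for a.e. }(x,y).
\]
The exceptional set $\{x=0\}\cup\{y=0\}\cup\{x=y\}$ is Lebesgue-null in $\RR^{2d}$ and plays no role.

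\textbf{Sufficiency.} Assume \cref{eq:condition-pq,eq:condition-scaling,eq:condition-sum,eq:condition-origin}. For $f$ in a dense class, for instance $C_c^\infty(\RR^d\setminus\{0\})$, the upper bound on the kernel gives the pointwise estimate
\[
 |\Ha^{-s/2}f(x)|\le C\,T_{s,\sigma}|f|(x).
\]
Since $\||x|^\alpha |f|\|_{L^p}=\||x|^\alpha f\|_{L^p}$, \cref{thm:model-strong} yields \cref{eq:main-estimate}. The only technical point is to make sure that $\Ha^{-s/2}f$, defined spectrally, coincides with the integral against its kernel for such $f$. One then extends by density, or simply takes the integral formula as the definition on the weighted space. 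The positivity of the kernel makes monotone convergence available, so this extension is harmless.

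\textbf{Necessity.} Assume \cref{eq:main-estimate} holds. For every nonnegative $f$, the lower kernel bound gives
\[
 T_{s,\sigma}f\le c^{-1}\Ha^{-s/2}f \quad\text{pointwise}.
\]
Hence \cref{eq:model-strong} holds for all $f\ge0$. Because $K_{s,\sigma}\ge0$, we have $|T_{s,\sigma}f|\le T_{s,\sigma}|f|$, so the estimate extends to all $f$. The \emph{only if} direction of \cref{thm:model-strong} then forces \cref{eq:condition-pq,eq:condition-scaling,eq:condition-sum,eq:condition-origin}.

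A small care point is that the necessity argument in \cref{thm:model-strong} may use test functions, such as bumps near or away from the origin or translated bumps, for which $\Ha^{-s/2}f$ has to be meaningful. These are nonnegative, compactly supported functions in the weighted space. For them both sides are defined by the positive kernel, possibly equal to $+\infty$, and the comparison applies verbatim.

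The main obstacle is therefore not in this reduction at all. It lies entirely in \cref{thm:model-strong}, whose proof occupies \cref{sec:necessity,sec:sufficiency}. The only thing to verify here is the bookkeeping: the kernel comparison holds almost everywhere with constants depending only on $d,a,s$, and the operator $\Ha^{-s/2}$ agrees with its kernel representation on the class of functions used.
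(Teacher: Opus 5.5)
Your proposal follows the paper's proof: the upper comparison in \cref{eq:kmmvzz-kernel} transfers sufficiency from \cref{thm:model-strong}, the positive lower comparison transfers the necessity tests, and the spectral operator is identified with the kernel operator on a dense class before extending. The one difference is the dense class: the paper identifies on a spectral core $\psi(\Ha)v$ with $\psi\in C_c^\infty((0,\infty))$, where $\Ha^{-s/2}$ is bounded on $L^2$, whereas a nonnegative $f\ne0$ in your class $C_c^\infty(\RR^d\setminus\{0\})$ produces output $\simeq|x|^{s+\sigma-d}$ at infinity, so it lies outside the $L^2$-domain of $\Ha^{-s/2}$ when $s+\sigma\ge d/2$, and in that range your fallback of defining the operator through its positive kernel is the one you actually need.
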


The Lorentz spaces below are taken with respect to Lebesgue measure and use the convention
\begin{equation}
 \|f\|_{L^{r,1}}
 =\int_0^\infty t^{1/r}f^*(t)\,\frac{\dd t}{t},
 \qquad
 \|f\|_{L^{r,\infty}}
 =\sup_{t>0}t^{1/r}f^*(t).
 \label{eq:lorentz-convention}
\end{equation}

\begin{theorem}[Origin-critical Lorentz endpoints]
\label{thm:endpoints}
Assume $d\ge3$, $0<\sigma\le(d-2)/2$, $0<s<d-2\sigma$, $1<p\le q<\infty$, and suppose that \cref{eq:condition-scaling,eq:condition-sum} hold.

If either
\begin{equation}
 \beta+\sigma=\frac d q
 \label{eq:output-critical}
\end{equation}
or
\begin{equation}
 \alpha+\sigma=\frac d{p'},
 \label{eq:input-critical}
\end{equation}
then
\begin{equation}
 \bigl\||x|^{-\beta}T_{s,\sigma}f\bigr\|_{L^{q,\infty}}
 \lesssim
 \bigl\||x|^\alpha f\bigr\|_{L^{p,1}}.
 \label{eq:lorentz-model}
\end{equation}
The same estimate holds with $T_{s,\sigma}$ replaced by $\Ha^{-s/2}$.

At either boundary, the corresponding strong $L^q$ estimate is false, and \cref{eq:lorentz-model} is false with $L^{p,1}$ replaced by $L^p$.
\end{theorem}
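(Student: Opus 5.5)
We prove \cref{thm:endpoints} by splitting $T_{s,\sigma}$ along the three-region decomposition used for the strong estimate. The only piece that interacts with the critical condition is the origin piece. Everything else is handled by the non-critical theory, after one possibly perturbs the exponents inside the admissible range.

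\textbf{Step 1: kernel comparison.} By \cref{eq:kmmvzz-kernel}, $\Ha^{-s/2}(x,y)\simeq K_{s,\sigma}(x,y)$ with a positive kernel. Lorentz quasi-norms are lattice norms, so it suffices to treat $T_{s,\sigma}$ acting on $|f|\ge0$.

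\textbf{Step 2: splitting the kernel.} Write $K_{s,\sigma}=K_1+K_2+K_3$, where:
\begin{itemize}[leftmargin=*]
\item $K_1$ lives on $|x-y|\le\frac12\max(|x|,|y|)$. There $|x|\simeq|y|$, the cap factor is $1$, and the kernel is a local Riesz kernel.
\item $K_2$ lives on $|y|\ge 2|x|$. There $K_{s,\sigma}\simeq|y|^{s-d}(|x|/|y|)^{-\sigma}$, the output-origin part.
\item $K_3$ lives on $|x|\ge2|y|$. There $K_{s,\sigma}\simeq|x|^{s-d}(|y|/|x|)^{-\sigma}$, the input-origin part.
\end{itemize}

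For the $K_1$ piece, the weights are comparable to $|y|^{\alpha}$ and $|x|^{-\beta}$ on each dyadic annulus. A Whitney/annular decomposition together with the unweighted Hardy--Littlewood--Sobolev inequality (of order $s'=s-\alpha-\beta\le s$, with $1/q=1/p-s'/d$ and $p\le q$) therefore gives the strong $L^p\to L^q$ bound. Since $L^{p,1}\hookrightarrow L^p$ and $L^q\hookrightarrow L^{q,\infty}$, this covers the $K_1$ piece. It uses neither origin condition.

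\textbf{Step 3: the origin pieces at the critical boundary.} Suppose \cref{eq:output-critical} holds. Then the $K_2$ piece is
\[
|x|^{-\beta-\sigma}\int_{|y|\ge2|x|}|y|^{s-d+\sigma}|f(y)|\,dy.
\]
It is bounded pointwise by $|x|^{-d/q}\,\Lambda(f)$, where
\[
\Lambda(f)=\int |y|^{s-d+\sigma-\alpha}\,\bigl(|y|^\alpha|f(y)|\bigr)\,dy .
\]
Put $g=|y|^{\alpha}f$. The scaling relation \cref{eq:condition-scaling} gives
\[
s-d+\sigma-\alpha=\beta+\sigma-d-d/q+d/p=-d/p' .
\]
Hence $\Lambda(f)=\int|y|^{-d/p'}g$, and $|y|^{-d/p'}\in L^{p',\infty}$. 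By the Lorentz Hölder inequality, $\Lambda(f)\lesssim\|g\|_{L^{p,1}}$. Moreover $|x|^{-d/q}\in L^{q,\infty}$, which gives the weak bound for this piece.

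The $K_3$ piece is then non-critical: the condition $\alpha+\sigma<d/p'$ follows from $\beta+\sigma=d/q$, $\alpha+\beta\ge0$ and scaling, or it is strictly satisfied. That piece is handled by the strong argument, which is a weighted Hardy inequality.

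The case \cref{eq:input-critical} is symmetric. The $K_3$ piece is
\[
|x|^{s-d+\sigma-\beta}\int_{|y|\le|x|/2}|y|^{-\sigma-\alpha}|g|\,dy\le |x|^{-d/q}\int|y|^{-d/p'}|g|,
\]
since $|y|^{-\sigma-\alpha}=|y|^{-d/p'}$ and scaling makes the exponent of $|x|$ equal to $-d/q$. The same Lorentz Hölder argument applies.

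The main subtlety is to check that when one boundary is attained, the other origin condition and the Hardy pieces are still in their strict range. If both boundaries hold simultaneously, both origin pieces are treated this way, so no issue arises.

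\textbf{Step 4: sharpness.} At either boundary, take $f=|y|^{-\alpha}\mathbf 1_{B(0,1)\setminus B(0,\varepsilon)}|y|^{-d/p}\bigl(\log(1/|y|)\bigr)^{-\gamma}$ with $1/p<\gamma\le1$, and put $g=|y|^\alpha f$. Then $\|g\|_{L^p}<\infty$, while $\int|y|^{-d/p'}g=\infty$ once $\gamma\le1$. Since $K_3$ or $K_2$ dominates the whole region, this gives, for $|x|\ge2$ (respectively for small $|x|$ with the analogous construction near infinity in the output case),
\[
|x|^{-\beta}T f(x)\gtrsim |x|^{-d/q}\cdot\infty .
\]
More carefully, truncate with $\varepsilon\to0$: the left side is $\gtrsim\Lambda_\varepsilon |x|^{-d/q}$ on a set of infinite range. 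Its $L^{q,\infty}$ norm is $\simeq\Lambda_\varepsilon\to\infty$, while $\|g\|_{L^p}$ stays bounded. This disproves both the $L^p\to L^{q,\infty}$ bound and, a fortiori, the strong bound. The operator $\Ha^{-s/2}$ inherits all of this through the two-sided comparison of Step 1.

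I expect the main obstacle to be Step 2/3 bookkeeping. The task is to verify that the near-diagonal and complementary origin pieces obey strong bounds under the assumed conditions when only one origin condition is strict. I would first try to cite the sufficiency proof of \cref{thm:model-strong} piece by piece, since each piece there should use only its own origin condition.
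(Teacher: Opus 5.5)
Your proposal follows the paper's route. You isolate the critical origin piece and, using the scaling relation, bound it pointwise by $|x|^{-d/q}\int|y|^{-d/p'}g(y)\,dy$. You then close with the Lorentz pairing against $|y|^{-d/p'}\in L^{p',\infty}$ together with $|x|^{-d/q}\in L^{q,\infty}$, and treat every remaining piece as a strong estimate.

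The differences from the paper are minor:
\begin{itemize}
\item You prove the non-critical pieces by hand, using annular HLS for the near-diagonal part (which needs only $p\le q$ and $\alpha+\beta\ge0$) and a power-weight Hardy inequality for the other origin piece. The paper instead dominates them by Stein--Weiss.
\item Your counterexamples are log-corrected truncation sequences, with the input pushed to infinity in the output-critical case. The paper uses a single log-corrected function near the origin in that case, and the truncated power $|y|^{-\alpha-d/p+\varepsilon}$ in the input-critical case.
\end{itemize}
Both versions work.

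There are three slips to repair.
\begin{enumerate}
\item Your three supports do not cover $\mathbb{R}^d\times\mathbb{R}^d$. The set where $|y|/2<|x|<2|y|$ but $|x-y|>\tfrac12\max(|x|,|y|)$ (e.g.\ $y=-x$) lies in none of them. Take $K_1$ on the whole band $|y|/2<|x|<2|y|$ instead. There the cap factor is at most $3^\sigma$ and $|x-y|<3\min(|x|,|y|)$, so your annular argument goes through unchanged. When $p=q$, i.e.\ $s'=0$, replace HLS by Young's inequality for the truncated kernel $|z|^{s-d}\mathbf 1_{\{|z|\lesssim 2^k\}}$.
\item The strictness of the other origin condition does not come from $\alpha+\beta\ge0$. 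At the output boundary, scaling forces $\alpha=s+\sigma-d/p$, so $\alpha+\sigma<d/p'$ is equivalent to $s+2\sigma<d$. The input boundary is symmetric, with $\beta=s+\sigma-d/q'$. The same computation shows the two boundaries never coincide, so your ``both boundaries'' case is vacuous.
\item In Step 4, with support $B(0,1)\setminus B(0,\varepsilon)$ and the factor $(\log(1/|y|))^{-\gamma}$, you get $\|g\|_{L^p}^p\simeq\int_0^{\log(1/\varepsilon)}u^{-\gamma p}\,du=\infty$, because $\gamma p>1$ makes the singularity at $|y|=1$ non-integrable. Use $\log(e/|y|)$ or restrict to $|y|<1/2$.
\end{enumerate}

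With these fixes, each fixed truncated input already produces $|x|^{-d/q}$ on a region where it is not in $L^q$: $|x|\ge2$ in the input-critical case, $|x|<1$ in the output-critical case. This also recovers the paper's stronger remark that $L^{p,1}\to L^q$ fails.
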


\begin{remark}
\label{rem:critical-boundaries}
The equalities in \cref{eq:output-critical,eq:input-critical} cannot occur simultaneously because $s+2\sigma<d$.
\end{remark}

\section{Necessary Conditions}
\label{sec:necessity}

We prove necessity for the model kernel. The lower comparison in \cref{eq:kmmvzz-kernel} then transfers the same tests to $\Ha^{-s/2}$.

\subsection{Scaling}

The kernel is homogeneous of degree $s-d$. Applying \cref{eq:model-strong} to $f_\lambda(x)=f(\lambda x)$ forces \cref{eq:condition-scaling}.

\subsection{Input concentration at the origin}

Fix the annulus $A=\{x\in\RR^d:2<|x|<3\}$ and let
\begin{equation}
 f_\varepsilon(y)
 =|y|^{-\alpha-d/p+\varepsilon}\one_{\{|y|<1/2\}}.
 \label{eq:input-test}
\end{equation}
Then
\begin{equation}
 \bigl\||y|^\alpha f_\varepsilon\bigr\|_{L^p}
 \simeq\varepsilon^{-1/p}.
 \label{eq:input-test-norm}
\end{equation}
For $x\in A$, the kernel is comparable to $|y|^{-\sigma}$ on the support of $f_\varepsilon$. The output contains
\begin{equation}
 \int_0^{1/2}
 r^{d-1-\sigma-\alpha-d/p+\varepsilon}\,\dd r.
 \label{eq:input-origin-integral}
\end{equation}
This integral diverges if $\alpha+\sigma>d/p'$. At equality, it is comparable to $\varepsilon^{-1}$, which is too large relative to \cref{eq:input-test-norm}. Thus the first condition in \cref{eq:condition-origin} is necessary and strict.

\subsection{Output concentration at the origin}

Let $f\ge0$ be smooth and supported in $\{y\in\RR^d:1<|y|<2\}$. For $|x|<1/4$,
\begin{equation}
 T_{s,\sigma}f(x)\gtrsim |x|^{-\sigma}.
 \label{eq:output-origin-lower}
\end{equation}
Strong $L^q$ boundedness therefore requires $q(\beta+\sigma)<d$, which is the second strict condition in \cref{eq:condition-origin}.

\subsection{The sum condition}

Translate a fixed pair of nearby small balls to a center $Re_1$. On these balls, the kernel is bounded below by a positive constant, while the weights are comparable to $R^\alpha$ and $R^{-\beta}$. Letting $R\to\infty$ gives \cref{eq:condition-sum}.

\subsection{The exponent ordering condition}

The direct multi-block construction in \cref{app:pq} uses geometrically separated balls whose radii are fixed fractions of their distances from the origin. Scaling makes every normalized block contribute a uniform amount to the output, and summing $M$ disjoint blocks forces $M^{1/q}\lesssim M^{1/p}$. This proves \cref{eq:condition-pq} and completes the necessity argument.

\section{Kernel Decomposition and Sufficiency}
\label{sec:sufficiency}

Set $r=|x-y|$ and introduce the disjoint regions
\begin{align}
 \Omega_0
 &=\left\{(x,y):r\le\tfrac14\min\{|x|,|y|\}\right\},
 \label{eq:omega-zero}\\
 \Omega_x
 &=\left\{(x,y):r>\tfrac14\min\{|x|,|y|\},\ |x|\le|y|\right\},
 \label{eq:omega-x}\\
 \Omega_y
 &=\left\{(x,y):r>\tfrac14\min\{|x|,|y|\},\ |y|<|x|\right\}.
 \label{eq:omega-y}
\end{align}

\begin{lemma}[Three-region bounds]
\label{lem:three-regions}
On the regions in \cref{eq:omega-zero,eq:omega-x,eq:omega-y}, respectively,
\begin{align}
 K_{s,\sigma}(x,y)&=r^{s-d},
 &&(x,y)\in\Omega_0,
 \label{eq:region-zero}\\
 K_{s,\sigma}(x,y)&\lesssim |x|^{-\sigma}r^{s+\sigma-d},
 &&(x,y)\in\Omega_x,
 \label{eq:region-x}\\
 K_{s,\sigma}(x,y)&\lesssim |y|^{-\sigma}r^{s+\sigma-d},
 &&(x,y)\in\Omega_y.
 \label{eq:region-y}
\end{align}
Moreover,
\begin{equation}
 K_{s,\sigma}(x,y)
 \le r^{s-d}
 +|x|^{-\sigma}r^{s+\sigma-d}
 +|y|^{-\sigma}r^{s+\sigma-d}.
 \label{eq:global-domination}
\end{equation}
\end{lemma}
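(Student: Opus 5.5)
The plan is to compute the minimum $m:=\frac{|x|}{r}\wedge\frac{|y|}{r}\wedge1$ directly in each region, using only the triangle inequality $\bigl||x|-|y|\bigr|\le r$ and the fact that $\sigma>0$. With this notation $K_{s,\sigma}=r^{s-d}m^{-\sigma}$. Any lower bound $m\ge c\,t/r$, with $t\in\{|x|,|y|\}$, then yields $K_{s,\sigma}\le c^{-\sigma}t^{-\sigma}r^{s+\sigma-d}$.

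\emph{Region $\Omega_0$.} Here $r\le\tfrac14\min\{|x|,|y|\}$, so both $|x|/r\ge4$ and $|y|/r\ge4$. The minimum is therefore attained by the constant $1$, and $m=1$. This gives the identity \cref{eq:region-zero} exactly.

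\emph{Region $\Omega_x$.} Here $|x|\le|y|$, so $\min\{|x|,|y|\}=|x|$ and $m=\min\{|x|/r,1\}$. We split into two cases.
\begin{itemize}
\item If $|x|\le r$, then $m=|x|/r$, and $K_{s,\sigma}=|x|^{-\sigma}r^{s+\sigma-d}$ holds with equality.
\item If $\tfrac14|x|<r<|x|$, then $m=1$ and $K_{s,\sigma}=r^{s-d}$. In this case $r/|x|>\tfrac14$, so $(r/|x|)^{\sigma}>4^{-\sigma}$, and hence $r^{s-d}\le 4^{\sigma}|x|^{-\sigma}r^{s+\sigma-d}$.
\end{itemize}
Together these give \cref{eq:region-x} with constant $4^\sigma$.

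\emph{Region $\Omega_y$.} The argument is identical with the roles of $x$ and $y$ exchanged, and gives \cref{eq:region-y}.

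For the global domination \cref{eq:global-domination}, I note first that the three regions partition $\RR^d\times\RR^d$ off the diagonal and off the axes $x=0$, $y=0$. The statement asks for constant $1$ rather than $\lesssim$, so the constant $4^\sigma$ from the regional bounds is not good enough and I would argue directly. Since $m\le1$ and $\sigma>0$, we have $m^{-\sigma}=\max\{1,(r/|x|)^\sigma,(r/|y|)^\sigma\}$, because $t\mapsto t^{-\sigma}$ is decreasing and so turns the minimum into a maximum. A maximum of nonnegative numbers is at most their sum. Multiplying by $r^{s-d}$ gives \cref{eq:global-domination} with constant exactly $1$.

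There is no real obstacle; the only points needing care are these.
\begin{itemize}
\item The region bounds carry the constant $4^\sigma$. This is harmless, since $\sigma\le(d-2)/2$ is fixed.
\item The max-to-sum step needs $\sigma>0$, which holds by \cref{eq:sigma}.
\item The kernel is only defined for $x,y\ne0$ and $x\ne y$, which is a null set. All statements are understood pointwise off that set.
\end{itemize}
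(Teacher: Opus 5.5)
Your proof is correct and follows the paper's argument essentially verbatim. On $\Omega_0$ you get $m=1$. On $\Omega_x$ you split into $|x|\le r$ versus $|x|/4<r<|x|$ (and symmetrically on $\Omega_y$). For the global bound with constant $1$ you use $(A\wedge B\wedge1)^{-\sigma}=\max\{A^{-\sigma},B^{-\sigma},1\}\le A^{-\sigma}+B^{-\sigma}+1$. The triangle inequality you announce at the outset is never actually used, and is not needed.
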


\begin{proof}
On $\Omega_0$, both ratios in \cref{eq:model-kernel} exceed one, which gives \cref{eq:region-zero}. On $\Omega_x$, one has $|x|/r<4$. Separating the cases $|x|\le r$ and $r<|x|<4r$ gives \cref{eq:region-x}. The proof of \cref{eq:region-y} is symmetric. Finally, for $A,B>0$,
\[
 (A\wedge B\wedge1)^{-\sigma}
 =\max\{A^{-\sigma},B^{-\sigma},1\}
 \le A^{-\sigma}+B^{-\sigma}+1,
\]
which proves \cref{eq:global-domination}.
\end{proof}

We use the following operator form of the Stein--Weiss theorem. If $0<\lambda<d$, $1<p\le q<\infty$, and
\begin{equation}
 \frac1q=\frac1p+\frac{A+B-\lambda}{d},
 \qquad
 A<\frac d{p'},
 \qquad
 B<\frac d q,
 \qquad
 A+B\ge0,
 \label{eq:stein-weiss-conditions}
\end{equation}
then
\begin{equation}
 \bigl\||x|^{-B}I_\lambda f\bigr\|_{L^q}
 \lesssim
 \bigl\||x|^A f\bigr\|_{L^p}.
 \label{eq:stein-weiss}
\end{equation}

\begin{proof}[Proof of sufficiency in \cref{thm:model-strong}]
Since $s+2\sigma<d$, both $s$ and $s+\sigma$ belong to $(0,d)$. By \cref{eq:global-domination},
\begin{equation}
 T_{s,\sigma}|f|
 \lesssim
 I_s|f|
 +|x|^{-\sigma}I_{s+\sigma}|f|
 +I_{s+\sigma}(|\cdot|^{-\sigma}|f|).
 \label{eq:three-stein-weiss-pieces}
\end{equation}
Apply \cref{eq:stein-weiss} with
\[
 (\lambda,A,B)=(s,\alpha,\beta),
 \qquad
 (\lambda,A,B)=(s+\sigma,\alpha,\beta+\sigma),
\]
and
\[
 (\lambda,A,B)=(s+\sigma,\alpha+\sigma,\beta).
\]
The scaling relation is unchanged, the individual upper restrictions follow from \cref{eq:condition-origin}, and the sum restrictions follow from \cref{eq:condition-sum}. Density in the weighted input space gives the bounded extension.
\end{proof}

\begin{proof}[Proof of \cref{thm:operator-strong}]
The upper comparison in \cref{eq:kmmvzz-kernel} transfers sufficiency from \cref{thm:model-strong}. The positive lower comparison transfers every necessity test. On a spectral core with compact spectral support away from zero, the kernel operator agrees with the spectral operator $\Ha^{-s/2}$. The weighted bounded extension is unique and therefore agrees with the spectral operator wherever both are defined.
\end{proof}

\section{Critical Lorentz Endpoints}
\label{sec:lorentz}

We first record the Lorentz pairing used below.

\begin{lemma}[Lorentz pairing]
\label{lem:lorentz-pairing}
For $1<r<\infty$,
\begin{equation}
 \int_{\RR^d}|u(x)v(x)|\,\dd x
 \lesssim
 \|u\|_{L^{r,1}}\|v\|_{L^{r',\infty}}.
 \label{eq:lorentz-pairing}
\end{equation}
Moreover,
\begin{equation}
 |x|^{-d/r}\in L^{r,\infty}(\RR^d)\setminus L^r(\RR^d).
 \label{eq:critical-power-lorentz}
\end{equation}
\end{lemma}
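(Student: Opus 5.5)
The plan is to derive \cref{eq:lorentz-pairing} from the Hardy--Littlewood rearrangement inequality and then check \cref{eq:critical-power-lorentz} by computing the decreasing rearrangement of the power explicitly.

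\textbf{The pairing.} By Hardy--Littlewood,
\[
\int_{\RR^d}|uv|\,\dd x\le\int_0^\infty u^*(t)v^*(t)\,\dd t .
\]
The convention \cref{eq:lorentz-convention} gives $v^*(t)\le t^{-1/r'}\|v\|_{L^{r',\infty}}$. Substituting this bound yields
\[
\int_0^\infty u^*(t)v^*(t)\,\dd t\le\|v\|_{L^{r',\infty}}\int_0^\infty t^{1-1/r'}u^*(t)\,\frac{\dd t}{t}.
\]
Since $1-1/r'=1/r$, the last integral is exactly $\|u\|_{L^{r,1}}$ in the convention \cref{eq:lorentz-convention}. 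The implicit constant is therefore $1$, and no normability of $L^{r,\infty}$ is needed.

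\textbf{The critical power.} Set $w(x)=|x|^{-d/r}$. For $\lambda>0$ the level set $\{w>\lambda\}$ is the ball of radius $\lambda^{-r/d}$, so its measure is $\omega_d\lambda^{-r}$, where $\omega_d$ is the volume of the unit ball. Inverting the distribution function gives
\[
w^*(t)=(t/\omega_d)^{-1/r}.
\]
Hence $\sup_{t>0}t^{1/r}w^*(t)=\omega_d^{1/r}<\infty$, so $w\in L^{r,\infty}(\RR^d)$. On the other hand, in polar coordinates
\[
\int_{\RR^d}|w|^r\,\dd x=c_d\int_0^\infty\rho^{-d}\rho^{d-1}\,\dd\rho=\infty,
\]
since $\int_0^\infty\rho^{-1}\,\dd\rho$ diverges logarithmically at both $0$ and $\infty$. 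Thus $w\notin L^r(\RR^d)$.

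Both parts are routine, and I do not expect a real obstacle. The only step needing care is the inversion of the distribution function: it is continuous and strictly decreasing in $\lambda$, so $w^*$ is its exact inverse.
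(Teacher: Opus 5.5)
Your proposal is correct and is essentially the paper's proof. Both use Hardy--Littlewood rearrangement with $v^*(t)\le t^{-1/r'}\|v\|_{L^{r',\infty}}$ for the pairing, and the distribution function $\omega_d\lambda^{-r}$ for the critical power; your explicit formula for $w^*$ and the polar-coordinate divergence only spell out what the paper leaves implicit.
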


\begin{proof}
The Hardy--Littlewood rearrangement inequality gives
\[
 \int_{\RR^d}|uv|
 \le\int_0^\infty u^*(t)v^*(t)\,\dd t.
\]
Using $v^*(t)\le \|v\|_{L^{r',\infty}}t^{-1/r'}$ and the convention in \cref{eq:lorentz-convention} proves \cref{eq:lorentz-pairing}. The second assertion follows from
\[
 \bigl|\{x\in\RR^d:|x|^{-d/r}>\lambda\}\bigr|
 =\omega_d\lambda^{-r}.
\]
This is the classical Lorentz framework associated with O'Neil \cite{ONeil1963}; the particular pairing needed here has been proved directly.
\end{proof}

\subsection{The output-critical boundary}

Assume \cref{eq:output-critical}. The scaling relation gives
\begin{equation}
 \alpha=s+\sigma-\frac d p.
 \label{eq:alpha-output-critical}
\end{equation}
The near-diagonal and input-origin pieces satisfy strict Stein--Weiss inequalities and map the weighted $L^{p,1}$ space into strong $L^q$.

On $\Omega_x$, geometry gives $|x-y|\simeq |y|$. With $g(y)=|y|^\alpha|f(y)|$, one has
\begin{align}
 |x|^{-\beta}|T_xf(x)|
 &\lesssim |x|^{-\beta-\sigma}
 \int_{\RR^d}|y|^{s+\sigma-d}|f(y)|\,\dd y
 \notag\\
 &=|x|^{-d/q}
 \int_{\RR^d}|y|^{-d/p'}g(y)\,\dd y.
 \label{eq:output-critical-piece}
\end{align}
Applying \cref{lem:lorentz-pairing} proves the weak $L^q$ bound. The finite quasi-triangle inequality in $L^{q,\infty}$ then yields \cref{eq:lorentz-model}.

Strong $L^q$ boundedness fails for a fixed input supported away from the origin. To disprove the weighted $L^p\to L^{q,\infty}$ estimate, choose $1/p<\theta<1$ and set
\begin{equation}
 g(y)
 =|y|^{-d/p}\bigl[\log(e/|y|)\bigr]^{-\theta}
 \one_{\{|y|<1/2\}},
 \qquad
 f(y)=|y|^{-\alpha}g(y).
 \label{eq:output-critical-counterexample}
\end{equation}
Then $g\in L^p$, whereas for $0<|x|\ll1$,
\begin{equation}
 |x|^{-\beta}T_{s,\sigma}f(x)
 \gtrsim
 |x|^{-d/q}\bigl[\log(e/|x|)\bigr]^{1-\theta},
 \label{eq:output-critical-counterexample-lower}
\end{equation}
which does not belong to weak $L^q$.

\subsection{The input-critical boundary}

Assume \cref{eq:input-critical}. Scaling gives
\begin{equation}
 \beta=s+\sigma-\frac d{q'}.
 \label{eq:beta-input-critical}
\end{equation}
The near-diagonal and output-origin pieces are interior strong estimates. On $\Omega_y$, one has $|x-y|\simeq |x|$, and with $g(y)=|y|^\alpha|f(y)|$,
\begin{equation}
 |x|^{-\beta}|T_yf(x)|
 \lesssim
 |x|^{-d/q}
 \int_{\RR^d}|y|^{-d/p'}g(y)\,\dd y.
 \label{eq:input-critical-piece}
\end{equation}
The Lorentz pairing in \cref{lem:lorentz-pairing} proves \cref{eq:lorentz-model}. The input-origin truncated-power test rules out the weighted $L^p\to L^{q,\infty}$ estimate. A fixed nonnegative input in a small annulus around the origin produces the non-$L^q$ tail $|x|^{-d/q}$, so $L^{p,1}\to L^q$ also fails.

The two-sided comparison in \cref{eq:kmmvzz-kernel} transfers the endpoint estimates and counterexamples to $\Ha^{-s/2}$, completing the proof of \cref{thm:endpoints}.

\section{Weighted Sobolev Consequences}
\label{sec:sobolev}

Let $u=\psi(\Ha)v$, where $v\in L^2(\RR^d)$ and $\psi\in C_c^\infty((0,\infty))$. Then
\begin{equation}
 u=\Ha^{-s/2}\Ha^{s/2}u.
 \label{eq:spectral-identity}
\end{equation}
Applying \cref{thm:operator-strong} gives the following consequence.

\begin{corollary}[Two-weight Sobolev estimate]
\label{cor:two-weight-sobolev}
Under the hypotheses of \cref{thm:operator-strong},
\begin{equation}
 \bigl\||x|^{-\beta}u\bigr\|_{L^q}
 \lesssim
 \bigl\||x|^\alpha\Ha^{s/2}u\bigr\|_{L^p}
 \label{eq:weighted-sobolev}
\end{equation}
whenever \cref{eq:condition-pq,eq:condition-scaling,eq:condition-sum,eq:condition-origin} hold. The estimate extends to the completion of the spectral core in the norm on the right-hand side of \cref{eq:weighted-sobolev}.
\end{corollary}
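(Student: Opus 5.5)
The plan is to reduce the corollary to \cref{thm:operator-strong} through the spectral identity \cref{eq:spectral-identity}. I will first fix the spectral core. This is the class of all $u=\psi(\Ha)v$ with $v\in L^2(\RR^d)$ and $\psi\in C_c^\infty((0,\infty))$. For such $u$, set $F:=\Ha^{s/2}u=\phi(\Ha)v$ with $\phi(\lambda)=\lambda^{s/2}\psi(\lambda)$. Since $\phi\in C_c^\infty((0,\infty))$ as well, $F$ again lies in the spectral core. Functional calculus then gives $\Ha^{-s/2}F=(\lambda^{-s/2}\phi)(\Ha)v=\psi(\Ha)v=u$, which is exactly \cref{eq:spectral-identity}. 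No domain issue arises, because the spectral support stays in a compact subset of $(0,\infty)$.

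Next I apply \cref{thm:operator-strong} with input $F$. If $\||x|^\alpha F\|_{L^p}=\infty$, the estimate \cref{eq:weighted-sobolev} is trivial. Otherwise $F$ lies in $L^2$ with compact spectral support away from zero, and by the proof of \cref{thm:operator-strong} the kernel operator on the right of \cref{eq:kmmvzz-kernel} agrees with the spectral $\Ha^{-s/2}$ on such functions. The bounded weighted extension is unique, so it coincides with $\Ha^{-s/2}F=u$. The main estimate \cref{eq:main-estimate} therefore yields $\||x|^{-\beta}u\|_{L^q}\lesssim\||x|^\alpha \Ha^{s/2}u\|_{L^p}$ under \cref{eq:condition-pq,eq:condition-scaling,eq:condition-sum,eq:condition-origin}.

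The step I expect to need the most care is this identification. Concretely, I must check that the kernel integral $\int \Ha^{-s/2}(x,y)F(y)\dd y$ converges absolutely and equals the spectral object when $F$ is only in $L^2$ with finite weighted norm. To do this, I will use the upper bound \cref{eq:global-domination} together with the Stein--Weiss bounds, which show the positive-kernel integral of $|F|$ is finite almost everywhere. I will then test against $C_c^\infty(\RR^d\setminus\{0\})$ functions, where the kernel representation of $\Ha^{-s/2}$ is available from \cite{KillipEtAl2018}, and conclude by Fubini.

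For the completion statement, let $(u_n)$ be a sequence in the core that is Cauchy for $\||x|^\alpha\Ha^{s/2}\cdot\|_{L^p}$. Applying \cref{eq:weighted-sobolev} to the differences $u_n-u_m$ shows that $(u_n)$ is Cauchy in $L^q(|x|^{-\beta q}\dd x)$, so it converges there. The limit depends only on the equivalence class in the completion, and the inequality passes to the limit by continuity of norms. This defines the extension and completes the proof.
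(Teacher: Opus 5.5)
Your proposal is correct and follows the paper's own route. On the spectral core $u=\psi(\Ha)v$ you write $u=\Ha^{-s/2}\Ha^{s/2}u$, apply \cref{thm:operator-strong} to $F=\Ha^{s/2}u$, and use the same kernel/spectral identification the paper asserts in its proof of that theorem; your Fubini check of that identification and your Cauchy-sequence argument for the completion make explicit what the paper leaves implicit. The Fubini check is cleanest through the subordination formula $\Ha^{-s/2}F=\Gamma(s/2)^{-1}\int_0^\infty t^{s/2-1}e^{-t\Ha}F\dd t$, since for large $s$ a test function need not lie in the $L^2$ domain of $\Ha^{-s/2}$.
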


The endpoint theorem similarly yields a Lorentz Sobolev estimate at either origin-critical boundary, with $L^{p,1}$ on the right and $L^{q,\infty}$ on the left.

\begin{corollary}[Same-exponent weighted Hardy--Sobolev family]
\label{cor:hardy-sobolev}
Let $1<p<\infty$ and suppose that
\begin{equation}
 s+\sigma-\frac d p
 <\alpha
 <\frac d{p'}-\sigma.
 \label{eq:alpha-range}
\end{equation}
Then
\begin{equation}
 \bigl\||x|^{\alpha-s}u\bigr\|_{L^p}
 \lesssim
 \bigl\||x|^\alpha\Ha^{s/2}u\bigr\|_{L^p}.
 \label{eq:hardy-sobolev-family}
\end{equation}
\end{corollary}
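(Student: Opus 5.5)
The plan is to deduce \cref{eq:hardy-sobolev-family} from \cref{cor:two-weight-sobolev} with the diagonal choice $q=p$ and $\beta=s-\alpha$. With this choice, $\||x|^{-\beta}u\|_{L^q}=\||x|^{\alpha-s}u\|_{L^p}$, so \cref{eq:weighted-sobolev} becomes exactly \cref{eq:hardy-sobolev-family}. It therefore suffices to check \cref{eq:condition-pq,eq:condition-scaling,eq:condition-sum,eq:condition-origin} for the exponents $(p,q,\alpha,\beta)=(p,p,\alpha,s-\alpha)$.

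The conditions check as follows.
\begin{itemize}
\item \emph{Ordering.} Condition \cref{eq:condition-pq} is trivial since $q=p$.
\item \emph{Scaling.} Condition \cref{eq:condition-scaling} reads $1/p=1/p+(\alpha+\beta-s)/d$, which holds because $\alpha+\beta=s$.
\item \emph{Sum.} Condition \cref{eq:condition-sum} is $\alpha+\beta=s\ge0$, true since $s>0$.
\item \emph{Input origin.} The first origin condition, $\alpha+\sigma<d/p'$, is the right inequality in \cref{eq:alpha-range}.
\item \emph{Output origin.} The second origin condition, $\beta+\sigma<d/q$, reads $s-\alpha+\sigma<d/p$, i.e. $\alpha>s+\sigma-d/p$. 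This is the left inequality in \cref{eq:alpha-range}.
\end{itemize}
The standing hypotheses of \cref{thm:operator-strong} (namely $d\ge3$, the range of $a$, and $0<s<d-2\sigma$) are inherited, since the corollary is stated in the same setting.

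Note that the interval in \cref{eq:alpha-range} is nonempty exactly when $s+2\sigma<d$, i.e. when \cref{eq:s-range} holds. So the hypothesis is consistent and no extra restriction is hidden.

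The inequality is first obtained for $u=\psi(\Ha)v$ in the spectral core, using the identity \cref{eq:spectral-identity}. It then extends to the completion in the right-hand norm, as stated in \cref{cor:two-weight-sobolev}.

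The only point needing care is that \cref{cor:two-weight-sobolev} requires $\Ha^{s/2}u$ to lie in the weighted $L^p$ space where \cref{thm:operator-strong} applies. For $u$ in the spectral core this holds, because $\Ha^{s/2}\psi(\Ha)v=\tilde\psi(\Ha)v$ with $\tilde\psi(\lambda)=\lambda^{s/2}\psi(\lambda)\in C_c^\infty((0,\infty))$. The identification of the kernel operator with the spectral operator was already handled in the proof of \cref{thm:operator-strong}. I expect no genuine obstacle: the proof is a parameter verification.
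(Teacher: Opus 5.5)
Your proposal is correct and matches the paper's proof: set $q=p$ and $\beta=s-\alpha$, note that the scaling and sum conditions are automatic because $\alpha+\beta=s>0$, and read off the two origin conditions as the two inequalities in \cref{eq:alpha-range}. One small quibble: compact spectral support of $\tilde\psi$ does not by itself place $\tilde\psi(\Ha)v$ in the weighted $L^p$ space. This is harmless, since whenever $|x|^\alpha\Ha^{s/2}u\notin L^p$ the inequality \cref{eq:hardy-sobolev-family} is trivially true.
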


\begin{proof}
Set $q=p$ and $\beta=s-\alpha$. The scaling and sum conditions are automatic, and the two origin restrictions are exactly \cref{eq:alpha-range}.
\end{proof}

The interval in \cref{eq:alpha-range} has length $d-s-2\sigma>0$. Weighted comparison estimates between powers of $\Ha$ and $-\Delta$ were proved by Bui, D'Ancona, Duong, Li, and Ly \cite[Theorem~1.1]{BuiEtAl2017}. \Cref{cor:hardy-sobolev} also applies at parameter values outside their comparison range, so it is not merely obtained by replacing $\Ha^{s/2}$ with $(-\Delta)^{s/2}$ and invoking the classical Stein--Weiss theorem.

Classical Caffarelli--Kohn--Nirenberg inequalities \cite{CaffarelliKohnNirenberg1984}, together with quadratic-form comparison, already imply several strictly subcritical $p=2$ energy estimates. Those estimates are not presented here as new contributions.

\section{The Hardy-Critical Friedrichs Case}
\label{sec:hardy-critical}

At
\begin{equation}
 a=-\frac{(d-2)^2}{4},
 \qquad
 \sigma=\frac{d-2}{2},
 \label{eq:hardy-critical-parameters}
\end{equation}
the kernel condition in \cref{eq:s-range} becomes
\begin{equation}
 0<s<2.
 \label{eq:hardy-critical-s-range}
\end{equation}
The two-sided kernel comparison of Killip et al. remains valid for the Friedrichs extension, without an additional logarithmic factor in this range. The critical positive harmonic profile is not an $L^2$ zero eigenfunction; the singular zero-energy behavior is already encoded by the factors in \cref{eq:kmmvzz-kernel}.

The strong and Lorentz theorems therefore remain valid under \cref{eq:hardy-critical-s-range}, subject to their respective weight conditions. However, familiar unweighted energy embeddings may lie exactly on an excluded origin boundary. For example, with $p=2$, $s=1$, $\alpha=\beta=0$, and $q=2d/(d-2)$,
\begin{equation}
 \beta+\sigma
 =\frac{d-2}{2}
 =\frac d q.
 \label{eq:critical-energy-boundary}
\end{equation}
Thus the strong estimate
\begin{equation}
 \|u\|_{L^{2d/(d-2)}}
 \lesssim
 \|\Ha^{1/2}u\|_{L^2}
 \label{eq:critical-energy-estimate}
\end{equation}
is not supplied at the critical coupling. This boundary behavior is one reason to state the Hardy-critical Friedrichs case separately.

\section{Relation to Previous Work and Limitations}
\label{sec:previous-work}

The fractional kernel in \cref{eq:kmmvzz-kernel}, together with multiplier, Littlewood--Paley, Hardy, and unweighted Sobolev results for $\Ha$, is due to Killip et al. \cite{KillipEtAl2018}. The contribution of the present note is the explicit power-weight mapping classification for that known kernel.

Nowak and Stempak \cite[Theorem~2.10]{NowakStempak2017} proved a sharp theorem for the one-dimensional non-modified Hankel potential. In the radial inverse-square sector, the Liouville transform uses the Hankel index
\begin{equation}
 \nu
 =\sqrt{\frac{(d-2)^2}{4}+a}
 =\frac{d-2}{2}-\sigma.
 \label{eq:hankel-index}
\end{equation}
Their individual power restrictions translate into those in \cref{eq:condition-origin}. Their radial sum condition becomes
\begin{equation}
 \alpha+\beta
 +(d-1)\left(\frac1p-\frac1q\right)
 \ge0,
 \label{eq:radial-sum-condition}
\end{equation}
which is weaker than \cref{eq:condition-sum}. The additional full-space restriction is forced by translated balls and cannot be detected by radial test functions. Mode-by-mode Hankel estimates do not provide the uniform vector-valued reconstruction required for arbitrary full-space data.

Sawyer's classical theorem and Kairema's metric-space extension characterize general two-weight bounds through testing conditions \cite{Sawyer1988,Kairema2013}. These results do not directly state the five explicit power conditions obtained here. Moreover, the fixed origin singularities make the inclusion of the undecomposed kernel in the standard potential-type monotonicity class nontransparent. Our proof instead resolves the kernel into three classical Stein--Weiss pieces.

The explicit full-space strong range and the two origin-critical Lorentz replacements do not appear in the literature checked by the authors. This is a cautious literature-priority statement, not a claim of exhaustive bibliographic completeness. Recent work on endpoints of the classical Stein--Weiss inequality includes \cite{SunWang2026}, but the inverse-square max-type kernel requires the localized argument in \cref{sec:lorentz}.

We do not address $p=1$, $q=\infty$, best constants, extremizers, other self-adjoint extensions, or general Schr\"odinger potentials. We claim no new heat-kernel, spectral multiplier, Bernstein, or Littlewood--Paley theory, and no solution of the broader open program for harmonic analysis of arbitrary Schr\"odinger operators.

\appendix
\section{A Direct Proof of the Exponent Ordering}
\label{app:pq}

Assume \cref{eq:model-strong} and the necessary scaling relation \cref{eq:condition-scaling}. Fix $c\in(0,10^{-2})$ and set
\begin{equation}
 R_j=10^j,
 \qquad
 x_j=R_je_1,
 \qquad
 r_j=cR_j,
 \label{eq:appendix-scales}
\end{equation}
and
\begin{equation}
 B_j=B(x_j,r_j),
 \qquad
 E_j=B(x_j,r_j/4).
 \label{eq:appendix-balls}
\end{equation}
These families are pairwise disjoint, and $|x|\simeq R_j$ on each corresponding ball. Define
\begin{equation}
 A_j=R_j^{-\alpha}r_j^{-d/p},
 \qquad
 f_j=A_j\one_{B_j},
 \qquad
 f_M=\sum_{j=1}^M f_j.
 \label{eq:appendix-functions}
\end{equation}
Then
\begin{equation}
 \bigl\||x|^\alpha f_M\bigr\|_{L^p}^p
 \simeq M.
 \label{eq:appendix-input-norm}
\end{equation}
For $x\in E_j$, the ball $B(x,r_j/8)$ is contained in $B_j$, and the kernel is the classical near-diagonal kernel there. Positivity gives
\begin{equation}
 T_{s,\sigma}f_M(x)
 \ge A_j\int_{|z|<r_j/8}|z|^{s-d}\,\dd z
 \gtrsim A_jr_j^s.
 \label{eq:appendix-output-lower}
\end{equation}
The weighted $L^q$ norm on $E_j$ is therefore bounded below by
\begin{equation}
 R_j^{-\beta}A_jr_j^s|E_j|^{1/q}
 \simeq
 R_j^{-\alpha-\beta}r_j^{s-d/p+d/q}.
 \label{eq:appendix-block-lower}
\end{equation}
By \cref{eq:condition-scaling},
\begin{equation}
 s-\frac dp+\frac dq=\alpha+\beta.
 \label{eq:appendix-scaling-identity}
\end{equation}
Because $r_j=cR_j$, the expression in \cref{eq:appendix-block-lower} is bounded below by a positive constant independent of $j$. Since the sets $E_j$ are disjoint,
\begin{equation}
 \bigl\||x|^{-\beta}T_{s,\sigma}f_M\bigr\|_{L^q}
 \gtrsim M^{1/q}.
 \label{eq:appendix-output-norm}
\end{equation}
Combining \cref{eq:model-strong,eq:appendix-input-norm,eq:appendix-output-norm} gives
\[
 M^{1/q}\lesssim M^{1/p}
\]
for every positive integer $M$. Hence $p\le q$.

\bibliographystyle{amsplain}
\bibliography{references}

\providecommand{\bysame}{\leavevmode\hbox to3em{\hrulefill}\thinspace}
\providecommand{\MR}{\relax\ifhmode\unskip\space\fi MR }
% \MRhref is called by the amsart/book/proc definition of \MR.
\providecommand{\MRhref}[2]{%
  \href{http://www.ams.org/mathscinet-getitem?mr=#1}{#2}
}
\providecommand{\href}[2]{#2}
\begin{thebibliography}{1}

\bibitem{BuiEtAl2017}
The~Anh Bui, Piero D'Ancona, Xuan~Thinh Duong, Ji~Li, and Fu~Ken Ly,
  \emph{Weighted estimates for powers and smoothing estimates of
  {Schr\"odinger} operators with inverse-square potentials}, Journal of
  Differential Equations \textbf{262} (2017), no.~3, 2771--2807.

\bibitem{CaffarelliKohnNirenberg1984}
Luis Caffarelli, Robert Kohn, and Louis Nirenberg, \emph{First order
  interpolation inequalities with weights}, Compositio Mathematica \textbf{53}
  (1984), no.~3, 259--275.

\bibitem{Kairema2013}
Anna Kairema, \emph{Two-weight norm inequalities for potential type and maximal
  operators in a metric space}, Publicacions Matem\`atiques \textbf{57} (2013),
  no.~1, 3--56.

\bibitem{KillipEtAl2018}
Rowan Killip, Changxing Miao, Monica Visan, Jiqiang Zhang, and Jialing Zheng,
  \emph{Sobolev spaces adapted to the {Schr\"odinger} operator with
  inverse-square potential}, Mathematische Zeitschrift \textbf{288} (2018),
  no.~3--4, 1273--1298.

\bibitem{NowakStempak2017}
Adam Nowak and Krzysztof Stempak, \emph{Potential operators associated with
  {Hankel} and {Hankel--Dunkl} transforms}, Journal d'Analyse Math\'ematique
  \textbf{131} (2017), no.~1, 277--321.

\bibitem{ONeil1963}
Richard O'Neil, \emph{Convolution operators and {$L(p,q)$} spaces}, Duke
  Mathematical Journal \textbf{30} (1963), no.~1, 129--142.

\bibitem{Sawyer1988}
Eric~T. Sawyer, \emph{A characterization of two weight norm inequalities for
  fractional and {Poisson} integrals}, Transactions of the American
  Mathematical Society \textbf{308} (1988), no.~2, 533--545.

\bibitem{SteinWeiss1958}
Elias~M. Stein and Guido Weiss, \emph{{Fractional Integrals on
  {$n$}-Dimensional {Euclidean} Space}}, Journal of Mathematics and Mechanics
  \textbf{7} (1958), no.~4, 503--514.

\bibitem{SunWang2026}
Chuhan Sun and Zipeng Wang, \emph{{On the End-Point of {Stein--Weiss}
  Inequality}}, The Journal of Geometric Analysis \textbf{36} (2026), no.~3,
  100.

\end{thebibliography}

\end{document}